\newtheorem{theorem}{Theorem}
\newtheorem{claim}{Claim}
\newtheorem{lemma}[theorem]{Lemma}
\newtheorem{example}{Example}
\def\vt{t\kern-0.22em\raise.18ex\hbox{\char'47}\lower.18ex\hbox{}\kern-0.08em}
\newcommand{\old}[1]{{}}
\renewcommand{\Pr}{{\rm Pr}}
\newcommand{\E}{{\rm E}}
\title{Abelian groups yield many large  families for the  diamond problem}
\author{\'Eva Czabarka \\
 Department of Mathematics, University of South Carolina\\
Columbia, SC 29208, USA\\
{\tt czabarka@math.sc.edu}
\and
Aaron Dutle\\ 
Department of Mathematics, University of South Carolina\\
Columbia, SC 29208, USA\\
{\tt dutle@mailbox.sc.edu}
\and 
Travis Johnston\\
Department of Mathematics, University of South Carolina\\
Columbia, SC 29208, USA\\
{\tt johnstjt@mailbox.sc.edu}
\and
L\'aszl\'o A. Sz\'ekely
\footnote{The first, second, and fourth authors acknowledge financial support from grant \#FA9550-12-1-0405 from the U.S.
Air Force Office of Scientific Research (AFOSR) and the Defense Advanced
Research Projects Agency (DARPA). The third and fourth authors  acknowledge financial support from the grant  \#1300547 from the U.S. National Science Foundation (NSF).  }\\
 Department of Mathematics, University of South Carolina\\
Columbia, SC 29208, USA\\
{\tt szekely@math.sc.edu}
 }
\def\FF{\mathcal{F}}
\def\BB{\mathcal{B}}
\date{\today}
\newcommand{\NN}{\mbox{$\mathbb N$}}
\newcommand{\ZZ}{\mbox{$\mathbb Z$}}
\newcommand{\eopf}{\raisebox{0.8ex}{\framebox{}}}
\newenvironment{proof}%
{\noindent{\bf Proof.}\ }%
{\hfill\eopf\par\bigskip}%
\begin{document}

\maketitle

\begin{abstract}  
 There is much recent interest  in excluded subposets. Given a fixed poset $P$, how many subsets of $[n]$ can
found without a copy of $P$ realized by the subset relation? The hardest and most intensely investigated
problem of this kind is when $P$ is a {\sl diamond}, i.e. the power set of a 2 element set. In this paper, we show infinitely many asymptotically tight constructions using random set families defined from posets 
based on Abelian groups.
They are provided by the convergence of Markov chains on
groups. Such constructions suggest that the  diamond problem is hard.
\end{abstract}

\eject

\section{Introduction}
This introduction largely follows the concise and accurate description of the background and history from  \cite{lu}. For posets $P = (P,\leq)$ and $P' = (P',\leq')$, we say $P'$ is a {\sl weak subposet} of
$P$ if there exists an injection $f : P' \rightarrow P$ that preserves the partial ordering,
meaning that whenever $u \leq' v$ in $P'$, we have $f(u) \leq  f(v)$ in $P$ (see \cite{stanley}).
By  subposet we always mean weak subposet. The
{\sl height} $h(P)$ of  a poset $P$ is the length of the longest chain in $P$.
We consider a family $\FF$ of subsets of $[n]$ a poset for the subset relation. If $P$ is not a subposet of  $ \FF$, we say $\FF$ is $P$-free. We are interested in determining
the largest size of a $P$-free family of subsets of $[n]$, denoted $La(n, P)$. Let $P_k$ denote the  total order of $k$ elements that we term as  $k$-chain. 
The archetypal result is Sperner's Theorem \cite{sperner,engelbook}:   $La(n,P_2) =\binom{n}{\lfloor n/2\rfloor}$.
Let $\BB(n, k)$ denote  the middle $k$ levels in the subset  lattice of $[n]$ and
let $\sum(n, k) := |\BB(n, k)|$. Erd\H os \cite{erdos,engelbook} proved that $La(n,P_{k+1}) = \sum(n, k)$. 

For any $\FF$ family of subsets of $[n]$, define its {\sl Lubell function} $h_n(\FF) :=\sum_{F\in\FF} \frac{1}{\binom{n}{|F|}}$.
The celebrated
Bollob\'as--Lubell--Meshalkin--Yamamoto (BLYM) inequality asserts that for a $P_k$-free family  $\FF$, we have $h_n(\FF) \leq k-1$, which was originally shown for
$k=2$ 
\cite{bollobas, lubbell, meshalkin, yamamoto} and extended by     P. L. Erd\H os, Z. F\"uredi,  G.O.H. Katona \cite{all-2fur}. (For a generalization of the BLYM inequality, 
where cases of equality characterize mixed orthogonal arrays, see Aydinian, Czabarka and Sz\'ekely \cite{ayd}.) The BLYM inequality gives the book proof to 
$La(n,P_{k+1}) = \sum(n, k)$. In view of this, it makes sense to study
$\lambda_n(P) = \max{h_n(\FF)}$, where the maximization is over    $P$-free families $\FF$ in $[n]$.

G.O.H Katona had a key role starting the investigation  of  extremal problems with excluded
posets \cite{carrollkatona, annalisa, debonis,  tarjan, griggskat}. Katona and Tarj\'an   \cite{tarjan} obtained bounds on $La(n, V_2)$ and later  De Bonis and Katona  
\cite{annalisa} 
extended it to $La(n, V_r)$, where the $r$-fork $V_r$ is the poset $A < B_1, . . . ,B_r$. The answers are asymptotic to
$\binom{n}{\lfloor n/2\rfloor}$, and most of the work is devoted to finding second and third terms in the asymptotic expansion.

For excluded posets $P$ whose Hasse diagram is a tree, surpassing earlier results of Thanh \cite{thanh} and 
Griggs and Lu \cite{griggslu}, finally Bukh \cite{bukh} solved the asymptotic problem for the main term. 
For any poset $P$, define $e(P)$ to be the maximum $m$ such that for
all $n$, the union of the $m$ middle levels $\BB(n,m)$ does not contain $P$ as a
subposet.  The relevance of $e(P)$ was suggested by Mike Saks and Peter Winkler.
Bukh \cite{bukh} showed
\begin{equation} \label{fraction}
\pi(P)=:\lim_{n\rightarrow \infty}
\frac{La(n,P)}{\binom{n}{\lfloor n/2\rfloor}}
\end{equation}
 is $e(P)$.

   Katona \cite{katona130}  attributes the now famous {\sl diamond problem} to a question 
   of an unidentified member of the audience at his talk. After all, if trees are solved, the next
   open problems must allow some cycles in the Hasse diagram of $P$.
   The {\sl diamond} $D_k$ is defined as $A < B_1, . . . ,B_k < C$, and with the term diamond
   we normally refer to $D_2$.

Griggs and Li \cite{griggsli} introduced a relevant class of posets.
 They termed a  poset $P$  {\sl uniform-L-bounded}, 
 if $\lambda_n(P) \leq e(P)$ for all $n$. For any  uniform-L-bounded  posets $P$, Griggs and Li \cite{griggsli}  proved $La(n, P) =
\sum(n, e(P))$.  Griggs,
Li, and Lu \cite{griggslilu} showed that 
the chain $P_k$, the  diamond $D_k$ if $2^{m-1}-1\leq k \leq 2^m-  \binom{m}{\lfloor m/2\rfloor}  -1$ (including the numbers  $k = 3, 4, 7, 8, 9, 15, 16, . . .$)
are uniform-L-bounded posets, and so are the harps $H(l_1, l_2, . . . , l_k)$ (consisting of chains $P_1, . . . ,P_k$ with their top elements
identified and their bottom elements identified, for $l_1 > l_2 > \cdots  > l_k$).
 
 Griggs and Lu \cite{griggslu}  conjecture that 
 for {\sl any poset} $P$,
  the limit in (\ref{fraction})
  exists and is an integer. All the results above are compatible with this conjecture.
  
  The {\sl crown} ${\cal O}_{2t}$ is defined as $A_1<B_1>A_2<B_2> \cdots >A_t<B_t>A_1$. Crowns are neither trees nor uniform-L-bounded. 
  The crown ${\cal O}_{4}$ is known as the {\sl butterfly}. De Bonis, Katona, and Swanepoel \cite{debonis} proved $La(n,{\cal O}_4) = \sum(n, 2)$.
Griggs and Lu \cite{griggslu} proved $\pi({\cal O}_{2t})=e({\cal O}_{2t})$ for $t\geq 4 $ even, and recently Lu \cite{lu} 
proved $\pi({\cal O}_{2t})=e({\cal O}_{2t})$ for $t\geq 7 $ odd, leaving open only ${\cal O}_{6}$ and ${\cal O}_{10}$.

The most famous open problem about excluded posets  is that of the diamond  $D_2$. Griggs and Lu
showed
\begin{equation} \label{diamondbounds}
2\leq \liminf_{n\rightarrow \infty}
\frac{La(n,D_2)}{\binom{n}{\lfloor n/2\rfloor}}\leq \limsup_{n\rightarrow \infty}
\frac{La(n,D_2)}{\binom{n}{\lfloor n/2\rfloor}}\leq 2.296.
\end{equation}
The cited conjecture of Griggs and Lu would imply that in (\ref{diamondbounds}) the limit exists and is equal to 2. This is what we refer to as  the {\sl diamond conjecture}.
Axenovich, Manske and Martin \cite{axen}  reduced the upper bound  in  (\ref{diamondbounds})    to 2.283; Griggs,
Li, and Lu \cite{griggslilu} further reduced it to $25/11$.
The current best upper bound is 2.25, achieved by  Kramer, Martin, and Young \cite{martin}. They also pointed out that this is the  best possible bound that can be
derived from a Lubell function argument. Manske and Shen \cite{3layer} has a better upper bound for 3-layered families of sets, 2.1547, improving on an earlier bound of 
Axenovich, Manske and Martin \cite{axen},  2.207. A similar improvement  
 for 3-layered families of sets was also made by Balogh et al.  \cite{balogh}.

For a long time no construction was known  for the diamond problem with more
sets than those in the two largest levels, though  some alternative constructions existed,  e.g. taking on 12 points  all 5-subsets, all 7-subsets and a Steiner system
$S(5,6,12)$. In 2013, Andrew Dove \cite{dove} made an improvement on this, for every even $n\geq 6$.
 For $n=6$, he provided 36 sets,
while the two largest levels contain only 35. As $n$ goes to infinity, the gain in his construction is diminishing in percentage. 
Through computer search, Linyuan Lu independently found a  construction for $n=6$ with 36 sets.


The goal of this paper is to provide infinitely many exotic examples that show the  asymptotical tightness of the diamond conjecture. These constructions are based on Abelian groups and are very different from the usual extremal set systems. 
We show that Dove's example fits into this description although his formulation was different.
The proofs use the theory of Markov chains on groups, allowing citations of theorems instead of making analytic proofs from scratch
to show a limiting uniform distribution.   Dove's example, however, uses a non-uniform distribution.

\section{Strongly diamond-free Cayley posets} \label{cayley}
Let us be given a finite group $\Gamma$ with identity $e$ and a set of generators $H\subseteq \Gamma$. Recall that the {\sl Cayley graph}
$\vec G(\Gamma, H)$ has vertex set $\Gamma$ and edge set $\{g \rightarrow gh: h\in H,g\in \Gamma \}$. We do not assume $H=H^{-1}$, an assumption often made 
for Cayley graphs. 
 We define the {\sl infinite Cayley poset} $P(\Gamma, H)$ as follows:\\
the vertices of the poset are ordered pairs $(\gamma, i)$, for $\gamma\in\Gamma$ and $i\in \ZZ$, and \\
$(\gamma, i)\preceq_P (\delta,j)$,  if  $j\geq i$ and $\gamma=\delta \eta_1\eta_2\cdots \eta_{j-i}$ for some $\eta_1,\eta_2,\ldots ,\eta_{j-i}\in H$. It is easy to see that
$P(\Gamma, H)$  is a partial order indeed. Furthermore, mapping the vertices of the infinite Cayley poset $P(\Gamma, H)$ to the vertices of the Cayley graph $\vec G(\Gamma, H)$
by projection to the first coordinate, upward oriented edges of the Hasse diagram map to the edges of the Cayley graph. We term finite subposets of the infinite Cayley poset as
{\sl Cayley posets}.


 We say that $H$ is {\sl aperiodic}, if for
$L=\{\ell: \exists \eta_1,...,\eta_\ell\in H \hbox{ such that } \eta_1\eta_2\cdots \eta_\ell=e \}$, the greatest 
common divisor of elements of $L$ is 1. We say that a (finite) Cayley poset is {\sl aperiodic}, if the generating set is aperiodic.


Assume now that $\Gamma$ is abelian of order $m$ and $|H|=h$ with $H=\{\eta_1,\eta_2,\ldots, \eta_h\}$.
 For convenience,  for abelian groups we use the additive notation. Let us be given an $n$-element set $N$ partitioned
into classes $N_1,N_2,...,N_h$, such that $|N_i|=n_i$.  
 Assign for $x\in N$ a {\sl weight} $w(x)\in H$, such that
for all $x\in N_i$, $w(x)= \eta_i$. 
We will refer to $N$ as a {\sl weighted set}.
For $A\subseteq N$, define $w(A)=\sum_{x\in A} w(x)$. For  every $i\geq 0$ and  $\gamma \in\Gamma$,  define $$s_\gamma(i):=\{A\subseteq N: |A|= \lfloor n/2\rfloor   +
i \hbox{ and } w(A)=\gamma\}.$$

Let  $(\gamma_1, i_1)\prec (\gamma_2, i_2)\prec(\gamma_3, i_3)$ be three distinct elements of a Cayley poset $\Pi$. If some $\eta \in H$ can be used in both an $i_2-i_1$ term sum of elements of $H$ representing $\gamma_2-\gamma_1$ and an $i_3-i_2$ term sum of elements of $H$ representing $\gamma_3-\gamma_2,$ we say that the three elements form a \emph{strong chain} in $\Pi$. We call a Cayley poset $\Pi$ {\sl strongly diamond-free}, if (1) $\Pi$ is diamond-free, and (2)  it has no strong chains.  We need the following easy lemma:
\begin{lemma} \label{corresp}
If a  Cayley poset  $\Pi$ with elements  $\{(\gamma_i, j_i): i=1,2,...,\ell\}$ is strongly diamond-free, then
  for a weighted $n$-element set $N$,
 the family of sets  $$\FF(N,w,\Pi):= \bigcup_{(\gamma_i,j_i)\in \Pi}  s_{\gamma_i}(j_i)  =\{A\subseteq N: |A|={ \lfloor n/2\rfloor  +j_i} \hbox{ and } w(A)=\gamma_i,  \hbox{ for }   i=1,2,...,\ell \}$$ is diamond-free.
\end{lemma}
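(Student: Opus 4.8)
The plan is to realize the whole family as a single pullback and then translate any diamond in it into a forbidden configuration in $\Pi$. Define the map
\[
\phi(A):=\bigl(w(A),\,|A|-\floor{n/2}\bigr)\in P(\Gamma,H),\qquad A\subseteq N.
\]
First I would check that $\phi$ is order preserving: if $A\subseteq B$, then adding the $|B|-|A|$ elements of $B\setminus A$ adds their weights, so $w(B)-w(A)$ is a sum of exactly $|B|-|A|$ generators from $H$, whence $\phi(A)\preceq_P\phi(B)$; moreover, whenever the containment is strict the images already differ in their second coordinate. By the very definition of the family, $A\in\FF(N,w,\Pi)$ precisely when $\phi(A)$ equals one of the listed elements $(\gamma_i,j_i)$, i.e. $\FF(N,w,\Pi)=\phi^{-1}(\Pi)$. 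The one piece of non-injectivity I must keep in mind is that two distinct sets of the same size and the same weight have the same image.

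Next I argue by contradiction. Suppose $\FF(N,w,\Pi)$ contains a diamond, i.e. four distinct sets with $A\subsetneq B_1$, $A\subsetneq B_2$, $B_1\subsetneq C$, $B_2\subsetneq C$ and $B_1\ne B_2$. Applying $\phi$ gives $\phi(A)\prec\phi(B_1)\prec\phi(C)$ and $\phi(A)\prec\phi(B_2)\prec\phi(C)$ inside $\Pi$, and since all four containments are strict we have $|A|<|B_1|,|B_2|<|C|$, so $\phi(A)$ and $\phi(C)$ are distinct from each other and from both middle images. The proof then splits on whether $\phi(B_1)=\phi(B_2)$. In the easy case $\phi(B_1)\ne\phi(B_2)$, the four images $\phi(A),\phi(B_1),\phi(B_2),\phi(C)$ are distinct elements of $\Pi$ carrying exactly the comparabilities $\phi(A)\prec\phi(B_1),\phi(B_2)\prec\phi(C)$; these realize a diamond as a weak subposet of $\Pi$ (whether or not the two middle images happen to be comparable), contradicting hypothesis (1) that $\Pi$ is diamond-free.

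The crux is the remaining case $\phi(B_1)=\phi(B_2)$, meaning $|B_1|=|B_2|$ and $w(B_1)=w(B_2)$; here the four images collapse to a $3$-chain $\phi(A)\prec\phi(B_1)\prec\phi(C)$ of three distinct elements of $\Pi$, which is not in itself forbidden, so I must instead exhibit a \emph{strong} chain. Write $X_1=B_1\setminus A$, $X_2=B_2\setminus A$, $Y_2=C\setminus B_2$, all sitting inside $C\setminus A$. Because $B_1\ne B_2$ have equal size, $X_1\ne X_2$ are equal-size subsets of $C\setminus A$, so I may pick $x\in X_1\setminus X_2$; since $C\setminus A=X_2\sqcup Y_2$, this forces $x\in Y_2$. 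Setting $\eta=w(x)\in H$, the generator $\eta$ appears in the $(|B_1|-|A|)$-term sum $w(X_1)=w(B_1)-w(A)$ representing $\gamma_2-\gamma_1$ (as $x\in X_1$) and, using $w(B_2)=w(B_1)$, also in the $(|C|-|B_2|)$-term sum $w(Y_2)=w(C)-w(B_2)=w(C)-w(B_1)$ representing $\gamma_3-\gamma_2$ (as $x\in Y_2$). Thus one and the same $\eta$ is usable in both steps, so $\phi(A)\prec\phi(B_1)\prec\phi(C)$ is a strong chain in $\Pi$, contradicting hypothesis (2). Both cases being impossible, $\FF(N,w,\Pi)$ is diamond-free.

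I expect this second case to be the genuine obstacle and the reason the hypothesis must be strengthened from ``diamond-free'' to ``strongly diamond-free'': the map $\phi$ can merge the two incomparable middle sets of a diamond into a single point, so mere diamond-freeness of $\Pi$ cannot see such a collapsed diamond, and the strong-chain condition is tailored exactly to forbid it by detecting a generator shared between the lower and upper steps of the resulting chain.
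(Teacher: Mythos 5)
Your proof is correct and follows essentially the same route as the paper's: map each set to $(w(A),|A|-\lfloor n/2\rfloor)$, split on whether the two middle sets have equal size and weight, and in the collapsed case extract a shared generator from an element of $B_1\setminus B_2$ (the paper's $z_1^2\in A_2\setminus(A_2\cap A_3)$ is exactly your $x$) to produce a strong chain. Your write-up is a somewhat cleaner packaging of the same argument.
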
 
\begin{proof}
Referring to a diamond in this proof,   we assume that
$a_1$ is its lowest element, $a_4$ is its largest element, and
$a_2,a_3$ are the middle (incomparable) elements.

We will show that if 
$  \FF(N,w,\Pi) $
 is not diamond-free,
then $\Pi$ is not strongly diamond-free.

If there are four different sets $A_1,A_2,A_3,A_4$ in $  \FF(N,w,\Pi) $
that correspond to a diamond $a_1,a_2,a_3,a_4$ resp., then,
$j_1< j_2,j_3< j_4$ and $j_i=|A_i|- {\binom{n}{\lfloor n/2\rfloor}}$, and also $\gamma_i=w(A_i)$.
Now we have that either ($j_2\ne j_3$) or
($j_2=j_3$ and $\gamma_2\ne\gamma_3$) or
$(j_2=j_3$ and $\gamma_2=\gamma_3)$

If  ($j_2\ne j_3$) or
($j_2=j_3$ and $\gamma_2\ne\gamma_3$), then the
four elements $(\gamma_i,j_i)$ $i\in[4]$ form a diamond
in  $\Pi$  so
$\Pi$ 
is not strongly diamond-free.

When $j:=j_2=j_3$ and $\gamma:=\gamma_2=\gamma_3$,
then
we have that
$w(A_1)=\gamma_1$, $w(A_4)=\gamma_4$, $|A_1|={\binom{n}{\lfloor n/2\rfloor}}+j_1$,
$|A_4|={\binom{n}{\lfloor n/2\rfloor}}+j_4$
$w(A_2)=w(A_3)=\gamma$ and $|A_2|=|A_3|={\binom{n}{\lfloor n/2\rfloor}}+j$.

Now clearly for $i\in\{2,3\}$ we have that
$A_1\subseteq A_2\cap A_3\subsetneq A_i\subsetneq A_2\cup A_3\subseteq A_4$.

Using $(A_2\cap A_3)\setminus A_1=\{x_1,\ldots,x_s\}$ (possibly empty), 
$A_4\setminus(A_2\cup A_3)=\{y_1,\ldots,y_k\}$ (possibly empty) and
$A_i\setminus (A_2\cap A_3)=\{z_1^i,z_2^i,\ldots,z_r^i\}$ (nonempty!) for
$i\in\{2,3\}$ it follows that
for $j\in(\{2,3\}\setminus\{i\})$ we have that
$(A_2\cup A_3)\setminus A_j=\{z_1^i,z_2^i,\ldots,z_r^i\}=
A_i\setminus(A_2\cap A_3)$.

It follows that $j=j_1+s+r$ and $j_4=j+r+k=j_1+2r+s+k$ and
that
$$\gamma=\gamma_1+\left(\sum_{\ell=1}^s w(x_{\ell})\right)
+\left(\sum_{\ell=1}^r w(z^2_{\ell})\right)$$
and
$$\gamma_4=\gamma+\left(\sum_{\ell=1}^k w(y_{\ell})\right)
+\left(\sum_{\ell=1}^r w(z^2_{\ell})\right),$$ where
$j-j_1=s+r$ and $j_4-j=k+r$.

Now since $r\ge 1$, we can choose $\eta:=w(z_1^2)$.

In particular, in this case we have found
$(\gamma_1,j_1)<(\gamma,j)<(\gamma_4,j_4)$ in  $\Pi$ 
such that $\gamma-\gamma_1$ can be written as a $(j-j_1)$-term
sum of elements of $H$ containing the term $\eta$ and
and $\gamma_4-\gamma$ is a $(j_4-j)$-term sum
of elements of $H$ containing the term $\eta$. Thus, in this case
$\Pi$ 
 is not strongly diamond-free either.
\old{
Now if in addition, we have that
the expected size of $S_{\gamma}(i)$ is
$\frac{1}{|G|}\cdot\binom{N}{\lfloor N/2\rfloor +i}=\frac{1}{|G}\binom{N}{\lfloor N/2\rfloor}(1+o(1))$ (using the fact that
${\mathcal G}$ is finite), then,
as $N\rightarrow\infty$,  we get that
$$\frac{|{\mathcal G}|}{|G|}\le
\liminf_{n\rightarrow\infty}
\frac{|{\mathcal S}_{{\mathcal G}}|}{\binom{N}{\lfloor N/2\rfloor}}
\le
\limsup_{N\rightarrow\infty}
\frac{|{\mathcal S}_{{\mathcal G}}|}{\binom{N}{\lfloor N/2\rfloor}}
\le \frac{La(n, diamond)}{\binom{N}{\lfloor N/2\rfloor}}$.
}
\end{proof} 

    \section{Markov chains on $\Gamma$} \label{markov}
Let us be given the set $N=[1,n]$. Assign  i.i.d. $H$-valued random variables $\omega(x)$ for every $x\in N$. Assume $\Pr[\omega(x)=\eta] >0$ for all $\eta\in H$ and extend the
probability distribution to $\gamma\in \Gamma\setminus H$ by $\Pr[\omega(x)=\gamma] =0$.   For an arbitrary $A\subseteq N$, assume $A=\{a_1<a_2<\cdots <a_{|A|}\}$.
Now we associate a finite Markov chain $X^A_j$ on $\Gamma$  for $j=0,1,...,|A|$ with $A$: define it with   $X_0^A=0$ for sure, and
$$X_i^A=\gamma   \hbox{\ iff \ }   \exists \delta\in \Gamma \hbox{\ such that } X_{i-1}^A=\delta 
\hbox{\ and \ }  \omega(a_{i})=\gamma-\delta     .$$
More explicitly, $X_i^A=\omega(a_1)+\omega(a_2)+\ldots + \omega(a_i)$. 
Consequently
$$\Pr[X_j^A=\gamma]= \sum_{\delta\in\Gamma}  \Pr[X_{j-1}^A=\delta ]\cdot \Pr[\omega(a_j)=\gamma-\delta]    .$$
If we defined analogously  the infinite Markov chain $X^A_j$ on $\Gamma$  for $j=0,1,...,$ for an infinite $A\subseteq \NN$,  the Markov chain would be irreducible 
if and only if $H$ is a generating set, and in this case the Markov chain would be aperiodic if and only if $H$ is not contained in a coset of a proper normal  subgroup of $\Gamma$ 
(see Proposition 2.3 in  \cite{saloff}). If the Markov chain is irreducible and aperiodic, then it converges to 
the unique stationary distribution on $\Gamma$, which is the uniform distribution 
(see p. 271 in  \cite{saloff}.) 
Hence assuming that $H$ is an aperiodic generating set,   it gives rise to an aperiodic Markov chain, and   $X^A_j$ converges to 
the uniform distribution.
The same results hold as well for  $X^A_j$ for a {\sl finite} set $A$, if $|A|$ is  {\sl sufficiently large}   for a fixed $\Gamma$.

\old{
the Fundamental Theorem of Markov
Chains \cite{behrends} that $\Pr[X_{|A|}^A=\gamma]$ is close  to  a unique stationary distribution. 
Furthermore, 
as the uniform distribution is stationary, the unique stationary distribution must be the uniform distribution. 
}
The Markov chains $X^A$  with different $A$'s do correlate, but we only will use the linearity of expectation.
Define $\omega(A)=\sum_{x\in A} \omega(x)$. For a fixed $i$ and a large $n$, set $$S_\gamma(i)=\{A\subseteq N: |A|= \lfloor n/2\rfloor   +
i \hbox{ and } \omega(A)=\gamma\},$$
a random family of sets.  Note that $\omega(A)=X_{|A|}^A  $ and $S_\gamma(i)$ are {\em random variables}, unlike $w(A)$ and $s_\gamma(i)$ in the previous section. By the 
convergence to uniform distribution recalled above, we have that for all $\epsilon>0$, for all sufficiently large $n$
\begin{equation} \label{epsilon}
\forall \gamma\in \Gamma \, \, \,  \forall A \, \, \,  \, \, \,   \frac{1}{|\Gamma|} -\epsilon < \Pr[\omega(A)=\gamma]    <  \frac{1}{|\Gamma|} +\epsilon. 
\end{equation}
Hence 
\begin{equation} \label{expectation}
\forall \gamma\in \Gamma \, \, \,    \frac{1}{|\Gamma|} -\epsilon < \frac{\E[|S_\gamma(i)|]}{{\binom{n}{\lfloor n/2\rfloor +  i}}}    <  \frac{1}{|\Gamma|} +\epsilon. 
\end{equation}
We reformulate (\ref{expectation}) above as a theorem:
  
\begin{theorem}\label{equidistr} {\rm  [Equidistribution theorem]}\\
Assume that $H$ is an aperiodic generating set of a finite abelian group of order $m$. Under the model above, for $i$ fixed as $n\rightarrow \infty$, we have
$$\lim_{n\rightarrow \infty} \frac{\E[|S_\gamma(i)|]}{\binom{n}{\lfloor n/2\rfloor +  i}}=\frac{1}{m}. $$
\end{theorem}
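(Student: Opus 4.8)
The plan is to reduce the statement to the convergence of the single finite Markov chain already set up above, and then invoke the cited convergence theorem. The whole point is that the expectation factors cleanly because the $\omega(x)$ are i.i.d.

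First I would apply linearity of expectation. Writing $k=\lfloor n/2\rfloor+i$ and expressing $|S_\gamma(i)|$ as a sum of indicators over the $k$-subsets of $N$,
$$\E[|S_\gamma(i)|]=\sum_{\substack{A\subseteq N\\ |A|=k}}\Pr[\omega(A)=\gamma].$$
The key observation is that $\omega(A)=X_{|A|}^A=\omega(a_1)+\cdots+\omega(a_k)$ is a sum of $k$ i.i.d. $H$-valued variables, so its law depends on $A$ only through $|A|=k$; in particular $\Pr[\omega(A)=\gamma]=\Pr[X_k=\gamma]$ for every $A$ with $|A|=k$, where $X_k$ is the time-$k$ position of the Markov chain on $\Gamma$. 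Since there are exactly $\binom{n}{k}$ such sets, the sum collapses to
$$\E[|S_\gamma(i)|]=\binom{n}{\lfloor n/2\rfloor+i}\,\Pr[X_k=\gamma],$$
so the ratio in the theorem is literally $\Pr[X_k=\gamma]$, with no dependence on $n$ beyond $k$.

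It then remains to show $\Pr[X_k=\gamma]\to 1/m$ as $k\to\infty$ (and $k\to\infty$ precisely because $i$ is fixed while $n\to\infty$). Here I would verify the two hypotheses of the convergence theorem. Irreducibility is immediate: since $H$ generates $\Gamma$, every $\gamma$ is reachable from the identity. For aperiodicity I would note that the period of the chain equals $\gcd\{k:\Pr[X_k=e]>0\}$, and $\Pr[X_k=e]>0$ holds exactly when $\eta_1+\cdots+\eta_k=e$ for some $\eta_j\in H$, i.e. exactly when $k\in L$; hence the period is $\gcd(L)$, which is $1$ by the assumed aperiodicity of $H$. Finally the uniform distribution is stationary, since
$$\sum_{\delta\in\Gamma}\frac{1}{m}\,\Pr[\omega=\gamma-\delta]=\frac{1}{m}\sum_{\eta\in\Gamma}\Pr[\omega=\eta]=\frac{1}{m},$$
so by the convergence theorem for finite irreducible aperiodic chains (the cited Saloff-Coste statement) the unique stationary limit is uniform and $\Pr[X_k=\gamma]\to 1/m$, which proves the claim.

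The proof carries essentially no analytic difficulty once the factorization is in place; the real content is imported as the cited Markov-chain convergence. The one point needing genuine care, and where a careless argument would go wrong, is the translation between the paper's combinatorial definition of aperiodicity ($\gcd(L)=1$, with $L$ the set of lengths of $H$-representations of $e$) and the Markov-chain notion of period $1$; one must check these coincide, as above. A secondary subtlety is that the theorem concerns a finite chain ($A$ is finite) while convergence is a statement as $k\to\infty$; this is harmless, since the time-$k$ law of $X_j^A$ for $|A|\ge k$ is identical to that of the infinite chain, so the convergence transfers verbatim.
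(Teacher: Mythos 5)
Your proof is correct and follows essentially the same route as the paper: both reduce the statement to the convergence of the finite Markov chain $X_j^A$ to the uniform distribution (the paper's display (3.1)--(3.2)), with the theorem then being a reformulation via linearity of expectation. Your only departures are cosmetic improvements --- you make explicit that the law of $\omega(A)$ depends on $A$ only through $|A|$, so the expectation collapses exactly rather than via the $\epsilon$-sandwich of (3.1)--(3.2), and you verify aperiodicity directly by identifying the chain's period with $\gcd(L)$ instead of citing the coset criterion from \cite{saloff}.
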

Observe that $ |\FF(N,w,\Pi)| \leq La(n,D_2)$, when $\Pi$ is strongly diamond-free.
Combining  Lemma~\ref{corresp}, Theorem~\ref{equidistr}, and the fact that for $i$ is fixed, the asymptotic formula
\begin{equation} \label{binomeq}
{\binom{n}{\lfloor n/2\rfloor +  i}}\sim {\binom{n}{\lfloor n/2\rfloor }}
\end{equation}
holds as $n\rightarrow \infty$,
 we immediately obtain the following theorem: 
\begin{theorem}\label{main} Assume that $H$ is an aperiodic generating set of a finite abelian group of order $m$.
If a fixed Cayley poset  $\Pi$ with elements  $(\gamma_i, j_i): i=1,2,...,\ell$ is strongly diamond-free, then 
$$\frac{\ell}{m} = \lim_{n\rightarrow \infty} \frac{\E[ |\FF(N,\omega,\Pi)|]  }{\binom{n}{\lfloor n/2\rfloor }} \leq\liminf_{n\rightarrow \infty} \frac{ La(n,D_2) }{\binom{n}{\lfloor n/2\rfloor }}.  $$

\end{theorem}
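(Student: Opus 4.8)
The plan is to prove the displayed equality and the displayed inequality separately: the equality is a computation of an expected value via linearity of expectation together with the Equidistribution Theorem, while the inequality is a standard probabilistic-method argument fed into Lemma~\ref{corresp}. First I would record the basic structural fact that, for any fixed weighting (random $\omega$ or deterministic $w$), the sets $S_{\gamma_i}(j_i)$ comprising $\FF(N,\omega,\Pi)$ are pairwise disjoint: a subset $A\subseteq N$ has a single cardinality $|A|$ and a single weight $\omega(A)$, so it can belong to $S_{\gamma_i}(j_i)$ for at most one of the $\ell$ distinct pairs $(\gamma_i,j_i)$. Consequently $|\FF(N,\omega,\Pi)|=\sum_{i=1}^{\ell}|S_{\gamma_i}(j_i)|$ as an identity of random variables, and linearity of expectation gives $\E[|\FF(N,\omega,\Pi)|]=\sum_{i=1}^{\ell}\E[|S_{\gamma_i}(j_i)|]$.

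For the equality I would apply Theorem~\ref{equidistr} term by term. Since $\Pi$ is fixed, every $j_i$ is a fixed integer as $n\to\infty$, so $\E[|S_{\gamma_i}(j_i)|]/\binom{n}{\lfloor n/2\rfloor+j_i}\to 1/m$. Replacing the denominator by $\binom{n}{\lfloor n/2\rfloor}$ via the asymptotic (\ref{binomeq}) yields $\E[|S_{\gamma_i}(j_i)|]/\binom{n}{\lfloor n/2\rfloor}\to 1/m$ for each $i$, and summing the $\ell$ limits gives $\E[|\FF(N,\omega,\Pi)|]/\binom{n}{\lfloor n/2\rfloor}\to \ell/m$, which is exactly the claimed equality.

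The inequality is where the probabilistic method enters, and this is the conceptual heart of the assembly (the genuine analytic difficulty having already been dispatched inside Theorem~\ref{equidistr}). For each fixed $n$, the quantity $\E[|\FF(N,\omega,\Pi)|]$ is an average over the random weighting $\omega$, so there must exist a deterministic realization $w$ with $|\FF(N,w,\Pi)|\ge \E[|\FF(N,\omega,\Pi)|]$. For this realization Lemma~\ref{corresp} applies, since $\Pi$ is strongly diamond-free by hypothesis, so $\FF(N,w,\Pi)$ is a diamond-free family of subsets of $[n]$ and hence $|\FF(N,w,\Pi)|\le La(n,D_2)$. Chaining these bounds gives $\E[|\FF(N,\omega,\Pi)|]\le La(n,D_2)$ for every $n$.

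Finally I would divide this pointwise inequality by $\binom{n}{\lfloor n/2\rfloor}$ and pass to the limit. Because the left-hand side has a genuine limit equal to $\ell/m$ by the second paragraph, taking $\liminf_{n\to\infty}$ across $\E[|\FF(N,\omega,\Pi)|]/\binom{n}{\lfloor n/2\rfloor}\le La(n,D_2)/\binom{n}{\lfloor n/2\rfloor}$ replaces the left side by $\ell/m$ and leaves the $\liminf$ on the right, giving $\ell/m\le \liminf_{n\to\infty}La(n,D_2)/\binom{n}{\lfloor n/2\rfloor}$. The only points warranting care are the disjointness of the $S_{\gamma_i}(j_i)$ (needed for the linearity step) and the legitimacy of combining the finitely many limits with a single pointwise inequality before taking $\liminf$; beyond these, the result is a direct citation of the already-established lemmas, which is why it can be billed as immediate.
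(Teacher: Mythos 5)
Your proposal is correct and follows essentially the same route the paper intends: it combines Lemma~\ref{corresp}, Theorem~\ref{equidistr}, and the asymptotic (\ref{binomeq}) exactly as the paper does, merely spelling out the disjointness/linearity-of-expectation step and the passage from the expectation to the bound by $La(n,D_2)$ that the paper leaves implicit. (The only cosmetic difference is that you invoke a realization $w$ with $|\FF(N,w,\Pi)|\ge \E[|\FF(N,\omega,\Pi)|]$, whereas one can note directly that $|\FF(N,\omega,\Pi)|\le La(n,D_2)$ holds for every realization and hence for the expectation; both are fine.)
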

The conclusion of this theorem is that if one constructs an  aperiodic generating set of a finite abelian group of order $m$ and strongly diamond-free Cayley poset of $\ell$
elements with this generating set, then for a large $n$, an $\FF(N,w,\Pi)$ with some weighting $w$  has size at least $\Bigl( \frac{\ell}{m}-\epsilon\Bigl){\binom{n}{\lfloor n/2\rfloor }}$.
A construction with  $\ell >2m$ would even refute the diamond conjecture.

Unfortunately, the $\ell/m$ lower bound of Theorem~\ref{main} never exceeds two. The proof is the 
following. Take any $\eta \in H$ and partition the infinite Cayley poset $P(\Gamma, H)$ into $|\Gamma|$ chains as
$$\Biggl\{  \{(\gamma+i\eta, i): i\in \ZZ\}  : \gamma\in \Gamma    \Biggl\}.$$
If a finite Cayley poset $\Pi$ is free of strong chains, which is part of the requirement to be strongly 
diamond-free, then $\Pi$ cannot have more than two elements from any of the  
 $\{(\gamma+i\eta): i\in \ZZ\} $, for any $\gamma$.

Therefore in the next section we focus on constructing finite Cayley posets with $\ell=2m$ or with just slightly fewer elements. 

\old{
 \section{The main result in the periodic case}
Reconsider the Markov chain defined in Section~\ref{markov} without the assumption of
aperiodicity.
Note that if a state is periodic with period $d$ for a Markov Chain on a finite abelian group, then all states are periodic
with period $d$. The reason is that for all of them the period is gcd$(L)$ as $L$ is defined in Section~\ref{cayley}. Let $\sigma_j$ denote the cardinality of $j\times H:=H+H+\ldots+H$, a $j$-fold sum.
It is clear that $\sigma_j\leq \sigma_{j+1}$, and therefore after a sufficiently large $j$, $\sigma_j$ 
turns a constant $\sigma=\max_j \sigma_j$.  Assume now gcd$(L)=d>1$.
We want to understand better the structure of $j\times H$. 
\begin{claim} \label{acclaim}
If $\gamma\in j\times H$, then for some $t_0>0$, for all $t\geq t_0$ integers 
$\gamma\in (j+td)\times H$.
\end{claim}
\begin{proof}
Assume that  $\ell_1,...,\ell_t \in L$ and gcd$(L)=$gcd$(\ell_1,...,\ell_t)$.
A classic result of Frobenius asserts that if $d=$gcd$(\ell_1,...,\ell_t)$ for some $\ell_1,...,\ell_t$ positive 
integers, then for every sufficiently large positive integer $r$, $rd=\sum_{i=1}^t x_i\ell_i$, with some
$x_i\geq 0$ integers. Therefore for any $0\leq j\leq d-1$,
$$j\times H\subseteq  (j\times H)+ (\ell_1x_1\times H)+\ldots + (\ell_tx_t\times H) =(j+rd)\times H,$$
as $0\in \ell_i\times H$ and hence  $0\in \ell_ix_i\times H$.
\end{proof}
Fixing $j$ and increasing $r$, the sets $(j+rd)\times H$ stays the very same set
as $r$ passes a certain threshold. 
For any $0\leq j\leq d-1$,
we denote this set by $\tilde{H}_j$. From the arguments above,
for every $j$,  $|\tilde{H}_j|=\sigma$.
Now we select a subsequence from the  Markov chain $X^A$ on $\Gamma$  
defined in Section~\ref{markov}, such that this subsequence will be Markov chain again.
Fix any $0\leq i\leq d-1$. Let $X_i^A$ be the $i^{th}$ state of $X^A$, a random variable. For $r\geq 1$
observe that 
$$X_{i+rd}^A=\gamma   \hbox{\ iff \ }   \exists \delta\in \Gamma \hbox{\ such that } X_{i+(r-1)d}^A=\delta 
\hbox{\ and \ }   \sum_{j=(r-1)d+i+1}^{rd+i} \omega(A^{(j)})=\gamma-\delta     .$$
Clearly $Y_r^{(i)}:=X_{i+rd}^A$ is a Markov chain on the state space $\tilde{H}_i$. 
We show for every $i$  the irreducibility of the Markov chain
$Y_r^{(i)}$  (for sufficiently large $A$). 
Assume that $B\subset \tilde{H}_i$ is a proper subset, from which
one never obtains an element of $ \tilde{H}_i\setminus B$ by adding to it $qd$ elements of $H$ for any 
positive integer $q$. In other words, $B+(qd\times H) \subseteq B$ for every positive integer $q$. Hence $B+\tilde{H}_0
\subseteq B\subseteq \tilde{H}_i$.  As $|\tilde{H}_0|=| \tilde{H}_i|$, this implies $|B|=|\tilde{H}_i|$ and $B=\tilde{H}_i$, a contradiction.
}
\old{
First we do this for $i=0$. Assume that $B\subset \tilde{H}_0$ is a proper subset, from which
one never obtains an element of $ \tilde{H}_0\setminus B$ by adding to it $qd$ elements of $H$.
In other words, $B+(qd\times H) \subseteq B$ for every positive integer $q$. Hence $B+\tilde{H}_0
\subseteq B\subseteq \tilde{H}_0$. This implies $|B|=|\tilde{H}_0|$ and $B=\tilde{H}_0$, a contradiction.
Assume now $0<i<d-1$. Take arbitrary $a,b\in \tilde{H}_i$. We have to show that we can obtain $b$ 
by adding some $pd$ elements of $H$ to $a$, for some positive integer $p$. Fix any $0\not=h\in H$.
It is easy to see that for $x\in \tilde{H}_0$, $x+ih\in \tilde{H}_i$, and $x\mapsto x+ih$ is bijective.
Therefore there is a $b^*\in \tilde{H}_0$ such that $b^*+ih=b$. By a similar argument, 
$a+(d-i)h\in \tilde{H}_0$. We use that wea already proved the claim for $i=0$. Hence
$b^*=\bigl(a+(d-i)h   \bigl)+\sum_{f=1}^{pd} h_f$, with $h_f\in H$. Finally,
$$b=ih +b^*=ih+\bigl(a+(d-i)h   \bigl)+\sum_{f=1}^{pd} h_f,$$
which is $a$ plus a sum of multiple-of-$d$ elements of $H$.
}
\old{
The aperiodicity of
$Y_r^{(i)}$ for every $i$ follows from the fact that gcd$(L)=d$.   Indeed, if a state of   $Y_r^{(i)}$ returns in $p$ steps, it means $0\in pd\times H$, i.e. $pd\in L$. As  gcd$(L)=d$, the gcd of all such $p$'s must be 1.  
Hence $Y_r^{(i)}$ converges to a unique stationary distribution on
$\tilde{H}_i$.  As the uniform distribution on $\tilde{H}_i$ is also stationary,    it follows that
$Y_r{(i)}$ converges to the uniform distribution, for large $r$ the probability every state of $\tilde{H}_i$
approaches to $1/\sigma$ (and elements from $\Gamma\setminus \tilde{H}_i$ never occur as values of $Y_r^{(i)}$).
The same way as in the aperiodic case, we have
$$\frac{La(n,D_2)}{\binom{n}{\lfloor n/2\rfloor }}\geq   \frac{\E[ |\FF(N,\Pi)|]  }{\binom{n}{\lfloor n/2\rfloor }}=
\sum_{i=1}^\ell \frac{\E[|S_{\gamma_i}(j_i)|]}{\binom{n}{\lfloor n/2\rfloor }}.$$
For positive integers $a,b$,
 let $a {\rm \ modulo \ } b$ denote the remainder when $a$ is divided by $b$, and the remainder is
 between $0$ and $b-1$.  Fix a $0\leq i \leq \ell$.
Set $t=     \lfloor n/2\rfloor +j_i  {\rm \ modulo \ } d  $.  Observe that if $\gamma_i\notin \tilde{H}_t$, then 
$S_{\gamma_i}(j_i)=\emptyset$. On the other hand, if $\gamma_i\in \tilde{H}_t$, then 
$$
\frac{\E[|S_{\gamma_i}(j_i)|]}{\binom{n}{\lfloor n/2\rfloor +j_i}}
$$
is near $1/\sigma$. Using the argument of (\ref{binomeq}) again, we conclude
\begin{theorem} \label{main3} If a Cayley poset  $\Pi$ with elements  $(\gamma_i, j_i): i=1,2,...,\ell$ is strongly diamond-free, then for any $0\leq p\leq d-1$, we have
\begin{equation} \label{mainRHS}
 \liminf_{n\rightarrow \infty \atop \lfloor n/2\rfloor \equiv p {\rm \ mod \ } d} \frac{La(n,D_2)}{\binom{n}{\lfloor n/2\rfloor }}\geq  \frac{1}{\sigma}  \Biggl|\Biggl\{1\leq i\leq \ell:   \gamma_i\in 
\tilde{H}_ {j_i+p {\rm\  modulo\ } d} \Biggl\}\Biggl|.
\end{equation}
\end{theorem}
Unfortunately, the RHS of (\ref{mainRHS}) is at most two. Indeed, fix $0\not=h\in H$, and  for every  $g\in \tilde{H}_p$, consider the 
infinite chains $g+jh: j\in \NN$. Every such chain can contribute by at most 2 to the count, and 
as $| \tilde{H}_p|=\sigma$, the RHS is at  most $\frac{1}{\sigma}2\sigma =2$.
XXX 
Between XXX's, comments to Aaron and Travis:
ACTUALLY, somewhat less is needed than strong diamond-freeness of the finite poset - but 
hard to describe.
It would be nice to have more tight examples. Can we make examples with the periodic case
where the finite Cayley poset does not have consecutive levels? 
For illustration to you, we worked out an example in the periodic case.
Take the construction of Example 2 with $m=15$, $a=6$, $b=11$. The period is $d=5$.
$\sigma=15/5=3$.
It is easy to see:
$\tilde{H}_1=\{ 1,6,11 \}$, 
$\tilde{H}_2=\{ 2,7,12 \}$, $\tilde{H}_3=\{ 3,8,13 \}$, $\tilde{H}_4=\{ 4,9,14 \}$, $\tilde{H}_0=\{ 0,5,10 \}$.
The set whose cardinality is taken in the RHS in 
(\ref{mainRHS}) is 
$p=0$ $\{5,10; 6,11;7,12\}$
$p=1$ $\{1,6,11; -;3,8,13\}$
$p=2$ $\{2,7,12;-;4,9,14\}$
$p=3$ $\{3,8,13;-;0,5,10\}$
$p=4$ $\{4,9,14; -; 1,6,11\}$. 
The significance of the $;$ in the set notation is that it separates 
items corresponding to the first, second and third row (counted from below) of Example 2.
XXX
}

\section{Constructions} \label{constructions}

\begin{example}
 {\rm [The classic example.]}
For any   $\Gamma$  and $H$, take two levels from the infinite Cayley poset. This is a strongly diamond-free Cayley poset with $2m$ vertices.
\end{example}
We leave the verification of the  correctness of the following constructions to the readers, where $\Gamma$ denotes the additive group of modulo $m$ residue classes.
\begin{example}
For an odd $m$, take $\Gamma=\ZZ_m$ with $H=\{a,b\}$ such that $gcd(a,b)=1$. The following is a strongly diamond-free Cayley poset with $2m$ vertices:\\
$(g, 3): g\not\equiv a+b \hbox{ mod } m$,\\ $(a,2)$, $(b,2)$,\\   $(g, 1): g\not\equiv 0 \hbox{ mod } m$. 
\end{example}
(Note that if $gcd(a,b)=1$, then $H=\{a,b\}$ is a set of generators.)
This poset if often aperiodic,  for example, $a=1, b=2$ are such for $m=3$. 
The exact condition for aperiodicity is that $H$ is not contained by a coset of a proper  subgroup of $\Gamma$.
(See Proposition 2.3 in  \cite{saloff}.)

\begin{example}
Take $\Gamma=\ZZ_7$ with $H=\{2,3,5\}$. The following is a strongly diamond-free, aperiodic Cayley poset with $13$ vertices:\\
$(g, 3): g\not\equiv 0,1,5 \hbox{ mod } 7$,\\ 
$(2,2)$, $(3,2)$,  $(5,2)$\\   
$(g, 1): g\not\equiv 0 \hbox{ mod } 7$.
\end{example}
Note that 1,2 mod 3 and 2,3,5 mod 7 are difference sets. However, bigger difference sets do not seem to offer good constructions.
On four levels, we still can construct "close" constructions.
\begin{example}
For $m=4k-1$, take $\Gamma=\ZZ_m$ with $H=\{2k-1,2k\}$, $k\geq 2$. The following is a strongly diamond-free, aperiodic Cayley poset with $2m-2$ vertices:\\
$(i, 4): i=k+2,...,3k-3$,\\ 
   $(i, j): i=k,k+1,...,3k-1$; for  $j=1,2,3$.
\end{example}
\begin{example}
For $m=4k+1$, take $\Gamma=\ZZ_m$ with $H=\{2k,2k+1\}$, $k\geq 2$. The following is a strongly diamond-free, aperiodic Cayley poset with $2m-2$ vertices:\\
$(i, 4): i=k+2,...,3k-2$,\\ 
   $(i, j): i=k,k+1,...,3k$; for  $j=1,2,3$.
\end{example}

The last two constructions still allow close approximations of the conjectured maximum $(2+o(1)){\binom{n}{\lfloor n/2\rfloor }}$. For any fixed $\epsilon$, set $k>1+\frac{1}{\epsilon}$
to have  $\frac{2m-2}{m}>2-\frac{\epsilon}{2}$. Fixing this $k$, for sufficiently large $n$, a set system is obtained from Example 4 or 5  with at least  $(2-\epsilon ){\binom{n}{\lfloor n/2\rfloor }}$
elements.

Andrew Dove's construction  for even $n\geq 6$ is the following: take for the underlying set $N=\{1,2,...,n\}$, 
select those $\lfloor \frac{n}{2} \rfloor-1$ element sets that
do not contain the set $\{1,2\}$, 
select those  $\lfloor \frac{n}{2} \rfloor$ element sets that {\it do not} contain exactly one element from the set $\{1,2\}$, and 
  select those $\lfloor \frac{n}{2} \rfloor+1$ element sets that
have non-empty intersection with the set $\{1,2\}$. It is easy to check that this family is diamond-free. Say, for $n=6$, the family has
$$\Biggl[\binom{6}{2}-1\Biggl]+\Biggl[\binom{4}{3}+\binom{4}{1}\Biggl]+\Biggl[\binom{6}{4}-1\Biggl]=14+8+14=36$$
elements, while two distinct middle levels have only $\displaystyle \binom{6}{2}+ \binom{6}{3}=15+20=35 $ elements.

\begin{example}
Take $\Gamma=\ZZ_3$ with $H=\{0,2\}$. The following is a strongly diamond-free, aperiodic Cayley poset with $6$ vertices:\\
$(g, \phantom{} 1): g\not\equiv 1 \hbox{ mod } 3$,\\ 
$(g, 0): g\not\equiv 0 \hbox{ mod } 3$,\\   
$(g, -1): g\not\equiv 2 \hbox{ mod } 3$.
\end{example}
Andrew Dove's construction can be obtained from this Example 
using Lemma \ref{corresp} but not Markov chains.
Take for the underlying set $N=\{1,2,...,n\}$, set $N_1=\{1,2\} $ with weights $w(1)=w(2)=2\in \ZZ_3$ and $N_2=\{3,4,...,n\}$
with weights $w(3)=w(4)=\ldots =w(n)=0\in \ZZ_3$. This construction gives  the very same family as Andrew Dove's construction.


\noindent{\bf Acknowledgement.} The authors thank the referees for their comments and corrections.


\begin{thebibliography}{99}

\bibitem{ayd}  H. Aydinian, \'E. Czabarka, and L. A. Sz\'ekely,  Mixed  orthogonal arrays,  $k$-dimensional $M$-part Sperner multi-families, and  full multi-transversals, in: {\em Ahlswede Festschrift},   eds. H. Aydinian, F. Cicalese, C. Deppe, Lecture Notes in Computer Science
 {\bf 7777}, 2013, Springer-Verlag,  463--471.  
 
\bibitem{axen}  M. Axenovich, J. Manske, and R. Martin, $Q_2$-free families in
the Boolean lattice, arXiv:0912.5039v1

 \bibitem{balogh}
 J. Balogh,   Ping Hu, B. Lidicky and Hong Liu,     Upper bounds on the size of 4- and 6-cycle-free subgraphs of the hypercube, {\sl European J. of Comb.} {\bf 35} (2014), 75--85. 
 
 
 \bibitem{bollobas} B. Bollob\'as, On generalized graphs,  {\sl Acta
    Math. Acad. Sci. Hung.} {\bf 16} (1965) (34),
    447--452.
 
\bibitem{bukh}   B. Bukh, Set families with a forbidden poset, {\sl Elect. J. Combin.} {\bf 16}
(2009), R142, 11p.
 
\bibitem{annalisa}  A. De Bonis, G. O. H. Katona, Largest families without an
$r$-fork, {\sl Order}, {\bf 24} (2007), 181--191.
 
\bibitem{debonis}   A. De Bonis, G. O. H. Katona and K. J. Swanepoel, Largest family
without $A \cup B \subseteq C \cap D$, {\sl J. Combin. Theory} Ser. A {\bf 111} (2005), 331--336.
  
\bibitem{saloff} L. Saloff-Coste, Random Walks on Finite Groups, in: {\sl Probability on Discrete Structures} ed. H. Kesten, Springer-Verlag Berlin Heidelberg 2004, 263--346. (see p. 271.)
 
\bibitem{carrollkatona}  T. Carroll and  G. O. H. Katona, Bounds on maximal families of sets
not containing three sets with $A \cup B \subset C$; $A \not\subset B$, {\sl Order}, {\bf 25} (2008),
229--236.
 
\bibitem{dove} A. Dove, {\sl  Generalizations of Sperner's Theorem: packing posets, families forbidding posets, and supersaturation}, 
Ph.D. Thesis, University of South Carolina, 2013.
 
 \bibitem{engelbook} K. Engel, {\sl Sperner Theory},
 Cambridge University Press,  Cambridge,  New York;
Encyclopedia of Mathematics and its Applications {\bf 65},  (1997) pp. x+417.

 
\bibitem{erdos}  P. Erd\H os, On a lemma of Littlewood and Offord, {\sl Bull. Amer. Math. Soc.}
{\bf 51} (1945), 898--902.

\bibitem{all-2fur} P. L. Erd\H os, Z. F\"uredi,  G. O. H. Katona,
    Two-part and $k$-Sperner families - new proofs using permutations,
    {\sl SIAM J. Discrete Math.} {\bf 19} (2005), 489--500.

\bibitem{griggskat}  J. R. Griggs and G.O.H. Katona, No four sets forming an $N$,
{\sl J. Combin. Theory} Ser A, {\bf 115} (2008) 677--685.
 
 \bibitem{griggsli} J. R. Griggs and W.-T. Li, Uniformly L-bounded posets, preprint (2011).
 
\bibitem{griggslu}  J. R. Griggs and L. Lu, On families of subsets with a forbidden subposet,
{\sl Combinatorics, Probability, and Computing} {\bf 18} (2009), 731--748.
 
\bibitem{griggslilu}  J. R. Griggs, W.-T. Li, and L. Lu, Diamond-free families, {\sl J.
Combin. Theory} Ser. A, {\bf 119} (2012) 310--322.
 
 \bibitem{katona130} G. O. H. Katona, Sperner type theorems with excluded
subposets,  {\sl Discrete Appl. Math.} {\bf 161} (2013),  1251--1258.
 
\bibitem{tarjan} G. O. H. Katona and T. G. Tarj\'an, Extremal problems with excluded
subgraphs in the $n$-cube, in: M. Borowiecki, J. W. Kennedy, and M.
M. Syslo (eds.) {\sl Graph Theory},  Lag\'ow, 1981, Lecture Notes in Math. Vol.
1018, 84--93, Springer, Berlin Heidelberg New York Tokyo, 1983.
 
\bibitem{martin} L. Kramer, R. Martin, M. Young, On diamond-free subposets of the
Boolean lattice,  {\sl J. Combin. Theory} Ser. A {\bf 120}(3)(2013), 545--560  

\bibitem{lu} L. Lu, On crown-free families of subsets,    {\sl J. Comb. Theory} Ser. A  {\bf 126}(2014) 216--231.   

    \bibitem{lubbell} D. Lubell, A short proof of Sperner's lemma, {\sl J.
    Comb. Theory} {\bf 1} (1966) (2), 299.

\bibitem{3layer} J. Manske, and Jian Shen, Three layer $Q_2$-free families in the Boolean 
lattice,  {\sl Order} {\bf   30}(2013)   585--592.            

\bibitem{meshalkin} L. D. Meshalkin, Generalization of Sperner's theorem
    on the number of subsets of a finite set, {\sl Theory of Probability
    and its Applications} {\bf 8} (1963) (2), 203--204.

\bibitem{sperner}  E. Sperner, Ein Satz \"uber Untermengen einer endlichen Menge, {\sl Math. Z.}
{\bf 27} (1928), 544--548.

  \bibitem{stanley}  R. P. Stanley, {\sl Enumerative Combinatorics} Vol. 1, Cambridge University
Press, 1997.
  
\bibitem{thanh}  Hai Tran Thanh, An extremal problem with excluded subposets in the
Boolean lattice, {\sl Order}, {\bf 15} (1998) 51--57.
  
  \bibitem{yamamoto} K. Yamamoto, Logarithmic order of free distributive
    lattice,  {\sl J. Math. Soc. Japan} {\bf 6} (1954), 343--353.

\end{thebibliography}
\end{document}